\renewcommand{\refname}{References}
\newcommand{\email}[1]{\href{mailto:#1}{#1}}
\newcommand{\R}{{\mathbb R}}
\newcommand{\Z}{{\mathbb Z}}
\newcommand{\M}{{\mathbb M}}
\newcommand{\X}{{\mathbb X}}
\newcommand{\Y}{{\mathbb Y}}
\theoremstyle{plain}
\newtheorem{theorem}{Theorem}[section]
\newtheorem{lemma}[theorem]{Lemma}
\newtheorem{example}[theorem]{Example}
\newtheorem{definition}[theorem]{Definition}
\newcommand{\figref}[1]{Figure~\ref{#1}}
\newcommand{\secref}[1]{Section~\ref{#1}}
\title{\textbf{On the Bootstrap for \\ Persistence Diagrams and Landscapes}}
\author[1]{Fr\'ed\'eric Chazal}
\author[2]{Brittany Terese Fasy}
\author[3]{Fabrizio Lecci}
\author[3]{\\Alessandro Rinaldo}
\author[4]{Aarti Singh}
\author[3]{Larry Wasserman}
\affil[1]{INRIA Saclay}
\affil[2]{Computer Science Department, Tulane University}
\affil[3]{Department of Statistics, Carnegie Mellon University}
\affil[4]{Machine Learning Department, Carnegie Mellon University}
\date{\email{topstat@stat.cmu.edu} \\ \text{} \\ \text{} \\
January 22, 2014
}
\begin{document}
\maketitle

\begin{abstract}
\noindent Persistent homology probes topological properties 
from point clouds and functions. 
By looking at multiple scales simultaneously, 
one can record the births and deaths of \emph{topological features} 
as the scale varies. 
In this paper we use a statistical technique, the empirical bootstrap,
to separate \emph{topological signal} from \emph{topological noise}. 
In particular, 
we derive confidence sets for persistence diagrams and confidence 
bands for persistence landscapes.
\end{abstract}

\clearpage
\section* {Introduction}
Persistent homology is a method for studying the
homology at multiple {scales} simultaneously.
Given a manifold
$\X$ embedded in a metric space $\mathbb{Y}$,
we consider a probability
density function $p \colon \mathbb{Y} \to \R$, 
defined over $\mathbb{Y}$ but concentrated
around $\X$; that is, the density is positive for a small
neighborhood around $\X$ and very small over $\mathbb{Y} \, \backslash \, \X$.
For the right scale parameter~$t$, the superlevel set $p^{-1}([t,\infty))$
captures the homology of $\X$.  The problem, however, is that~$t$ is not known
a priori.  Persistent homology
quantifies the topological changes of the superlevel sets with a
multiset of points in the extended plane; we call this
multiset the persistence diagram, and denote it by $\mathcal{P}$.
Another way to represent the information contained in a 
persistence diagram is with the \emph{landscape} function
$\mathcal{L} \colon \R \to \R$, which can be thought of as
a functional summary of $\mathcal{P}$; we define these 
concepts in \secref{ss:topology}.

Computationally, it may be difficult to compute $\mathcal{P}$
or $\mathcal{L}$
directly.  Instead, we assume that $p$ corresponds to 
a probability distribution $P$, from which we can sample.
Given a sample of size $n$, we create an estimate
of the probability density function $p_n$ using a
kernel density estimate.  As $n$ increases,
$p_n$ approaches the true probability density.  Given
$n$ large enough, we compute the persistence diagram
$\mathcal{P}_n$ and the landscape $\mathcal{L}_n$
corresponding to $p_n$.  

Sometimes knowing the estimate of a persistence
diagram or landscape is not enough.
The bigger question is: How close is the estimated
persistence diagram or landscape to the true one?
We answer this question by constructing
a \emph{confidence set
for persistence diagrams} and a \emph{confidence
band for persistence landscapes}.

A \emph{$(1-\alpha)$-confidence interval} for a
parameter $\theta$ is an interval $[a,b]$ such that
the probability $\mathbb{P}(\theta \in [a,b])$ is at
least $1-\alpha$.  In our setting,
we desire to find a confidence set for a persistence
diagram $\mathcal{P}$.  To do so, we compute
an estimated diagram $\widehat{\mathcal{P}}$ and
and interval $[0,c]$ such that the bottleneck
distance between $\mathcal{P}$ and 
$\widehat{\mathcal{P}}$ is contained in $[0,c]$
with probability $1-\alpha$.  That is, we find
a metric ball containing $\mathcal{P}$ with
high~probability.

In this paper, we present the bootstrap, a method 
for computing confidence intervals, and we apply it
to persistence diagrams and landscapes.
After briefly reviewing the necessary concepts from 
computational topology, we give the general
technique of boot\-strapping in statistics in 
\secref{ss:bootstrap}.  In
\secref{sec:boot4pers}, we apply the bootstrap
to persistence diagrams and landscapes, providing
a few examples of these confidence intervals.
We conclude in \secref{sec:discussion} with a
discussion of our ongoing research and open questions.

\section{Background}
Before presenting our results, we review the necessary definitions and
theorems from persistent homology.  
Then, we present the bootstrap.  Due to space constraints, we cover the
basics and provide references for a more detailed description.

\subsection{Persistence Diagrams and Landscapes}\label{ss:topology}
Let $\Y$ be a metric space, for example. let $\Y$ be 
a compact subspace of $\R^D$.
Suppose we have a probability density function $p \colon \Y \to \R$
concentrated in a neighborhood of a manifold $\X \subseteq \Y$.
Persistent homology monitors the evolution of the generators
of the homology groups of $p^{-1}([t,\infty))$, the superlevel sets of $p$,
and assigns to each generator of these groups a birth time 
(or scale) $b$ and a death time $d$.
The persistence diagram~$\mathcal{P}$ 
records each pair $(b,d)$ as
the point $(\frac{b+d}{2}, \frac{b - d}{2})$; that is,
the $x$-coordinate is the mid-life of the homological feature
and the $y$-coordinate is the half-life or half of the persistence of 
the feature.\footnote{In this paper, we focus on the persistent homology of
the superlevel set filtration of a density function.  Thus,
the birth time $b$ is greater than the death time $d$.}
We refer the reader to~\cite{edelsharer} for a more complete
introduction to persistent~homology.

Let ${\cal D}_T$ be the space of positive, countable, 
$T$-bounded persistence diagrams; that is,
for each point~$(x,y) = (\frac{b+d}{2}, \frac{b - d}{2}) \in {\cal P}$, 
we have $0 \leq d \leq b \leq T$ and there
are a countable number of points for which $y > 0$.
We note here that each point on the line $x=0$ is included in
the persistence diagram $\mathcal P$ with infinite multiplicity.
Letting $W_{\infty}(\mathcal{P}_1, \mathcal{P}_2)$ denote the bottleneck
distance between diagrams $\mathcal{P}_1$ and $\mathcal{P}_2$,
the space $({\cal D}, W_{\infty})$ is a
metric space.  We then have the following stability result
from~\cite{stability} and generalized in~\cite{chazal2012structure}:
\begin{theorem}[Stability Theorem]
\label{th:stability}
Let $\M$ be finitely triangulable.
Let $f,g \colon \M \to \R$ be two continuous functions.
Then, the corresponding persistence diagrams $\mathcal{P}_f$ and $\mathcal{P}_g$
are well defined, and
$
W_\infty({\cal P}_f,{\cal P}_g) \leq  \Vert f - g\Vert_\infty.
$
\end{theorem}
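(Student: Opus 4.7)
The plan is to reduce the geometric stability statement to an algebraic interleaving statement about the persistence modules associated with the superlevel-set filtrations, and then invoke an algebraic stability result to pass from interleavings to bottleneck matchings. Let $\epsilon = \|f - g\|_\infty$. The pointwise bounds $g - \epsilon \le f \le g + \epsilon$ translate immediately into the nested inclusions of superlevel sets
\[
g^{-1}([t+\epsilon,\infty)) \;\subseteq\; f^{-1}([t,\infty)) \;\subseteq\; g^{-1}([t-\epsilon,\infty))
\]
for every $t \in \R$, together with the symmetric chain obtained by swapping $f$ and $g$. This geometric step is essentially bookkeeping.

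Next I would establish that $\mathcal{P}_f$ and $\mathcal{P}_g$ are well defined. The assumption that $\M$ is finitely triangulable, combined with continuity of $f$ and $g$, lets me replace each function by a simplicial interpolation on a sufficiently fine subdivision, yielding persistence modules that are pointwise finite-dimensional and tame. Such modules decompose into a direct sum of interval summands, so their persistence diagrams exist as well-defined multisets in the extended plane. Passing back to the continuous setting uses that one can take the subdivisions arbitrarily fine and the diagrams converge (in the bottleneck metric) to those of $f$ and $g$; this is also the regime in which \cite{chazal2012structure} provides an intrinsic definition that bypasses the triangulation altogether.

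Then I would apply singular homology (with field coefficients) to the superlevel-set inclusions above, setting $V_t = H_*(f^{-1}([t,\infty)))$ and $W_t = H_*(g^{-1}([t,\infty)))$. Functoriality produces linear maps $\phi_t \colon V_t \to W_{t-\epsilon}$ and $\psi_t \colon W_t \to V_{t-\epsilon}$ which, together with the internal structure maps of the two persistence modules, satisfy the commutativity conditions making $V$ and $W$ into an $\epsilon$-interleaved pair. At this point the problem has been translated entirely into algebra.

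The main obstacle is the algebraic stability step: converting an $\epsilon$-interleaving of persistence modules into a partial matching of their persistence diagrams in which every matched pair is within $\epsilon$ in the $\ell^\infty$ norm and every unmatched off-diagonal point has half-life at most $\epsilon$. This is precisely the algebraic stability theorem proved in \cite{chazal2012structure}, whose heart is a box/quadrant lemma counting intervals in a decomposition that are forced to persist across the $\epsilon$-shifts, followed by a Hall-type marriage argument to assemble the matching globally. Once this matching is produced, the bound $W_\infty(\mathcal{P}_f,\mathcal{P}_g) \le \epsilon = \|f-g\|_\infty$ follows directly from the definition of the bottleneck distance.
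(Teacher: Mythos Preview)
The paper does not actually prove this theorem; it is stated in the background section as a known result, with attribution to \cite{stability} and its generalization in \cite{chazal2012structure}. There is therefore no proof in the paper to compare your proposal against.

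That said, your sketch is a faithful outline of the modern proof via algebraic stability, essentially the route taken in \cite{chazal2012structure}: translate the sup-norm bound into an $\epsilon$-interleaving of the superlevel-set persistence modules, then invoke the algebraic stability/isometry theorem to extract a bottleneck matching. This is correct and is the generalization the paper cites. For context, the original proof in \cite{stability} proceeds differently, interpolating linearly between $f$ and $g$ and tracking how the diagram evolves along the homotopy $f_s = (1-s)f + sg$, using the tameness guaranteed by finite triangulability to control the diagram's motion; your approach via interleavings is cleaner and applies in greater generality, which is precisely why the paper also points to \cite{chazal2012structure}.
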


\cite{bubenik2012statistical} introduced another representation
called the persistence landscape, which is in one-to-one
correspondence with persistence diagrams.
A persistence landscape
is a continuous, piecewise linear 
function~\mbox{$\mathcal{L} \colon  \Z^{+} \times \R \to \R$.}  
To define the persistence landscape function, we replace
each persistence point $p = (x,y)= \left( \frac{b+d}{2}, \frac{b-d}{2} \right)$
with the triangle function 
\begin{equation*}
 t_p(z) =
 \begin{cases}
  z-x+y & z \in [x-y, x] \\
  x+y-z & z \in (x,  x+y] \\
  0 & \text{otherwise}
 \end{cases}
 =
 \begin{cases}
  z-d & z \in [d, \frac{b+d}{2}] \\
  b-z & z \in (\frac{b+d}{2}, b] \\
  0 & \text{otherwise}.
 \end{cases}
\end{equation*}
Notice that $p$ is itself on the graph of $t_p(z)$.
We obtain
an arrangement of curves by overlaying the graphs of 
the functions~\mbox{$\{ t_p(z) \}_{p \in \mathcal{P}}$}; see~\figref{fig:triangles}.  
\begin{figure}
 \centering
 \includegraphics[height=1.4in]{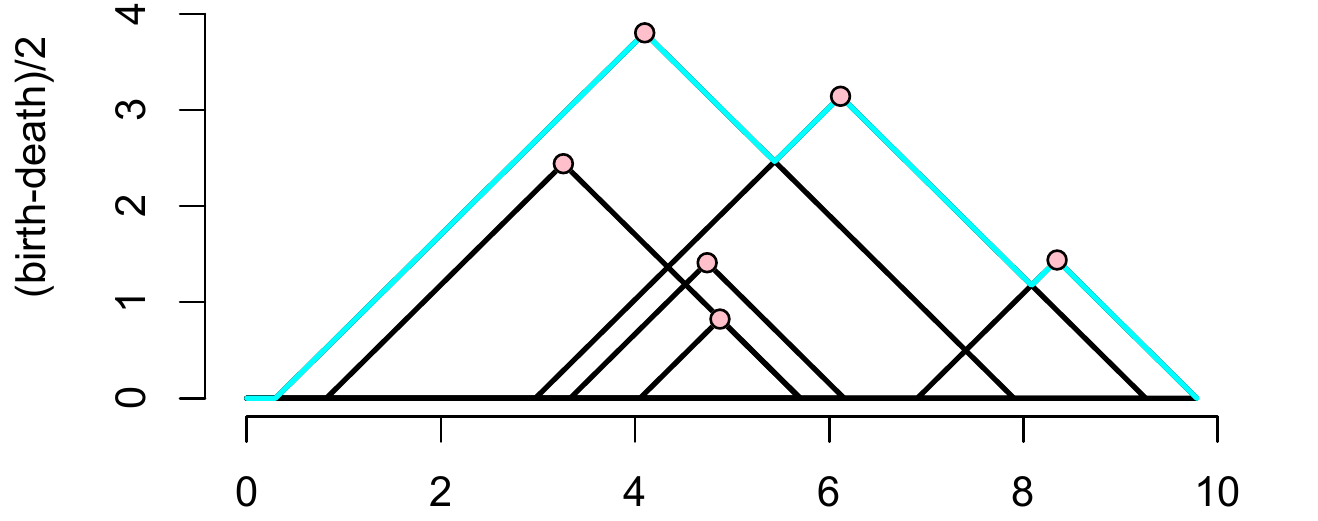}
 \caption{The pink circles are the points in a persistence diagram.
          The cyan curve is the landscape $\mathcal{L}(1,\cdot)$.}
 \label{fig:triangles}
\end{figure}
The persistence landscape is defined formally as
a walk through this arrangement:
\begin{equation}\label{eq:landscape}
 \mathcal{L}_{\mathcal{P}}(k,z) = \underset{p \in \mathcal{P}}{\text{kmax}} ~ t_p(z),
\end{equation}
where kmax is the $k$th maximum value in the set; in particular,
$1$max is the usual maximum~function.
Observe that $\mathcal{L}_{\mathcal{P}}(k,z)$ is $1$-Lipschitz.
For the ease of exposition, we will focus on $k=1$ in this paper,
using $\mathcal{L}(z) = \mathcal{L}_{\mathcal{P}}(1,z)$.  However,
the ideas we present in~\secref{ss:landscapeboot} hold for~$k > 1$. 
Our definition of the persistence landscape 
is equivalent to the original definition given
in \cite{bubenik2012statistical}.

\subsection{The Standard Bootstrap}\label{ss:bootstrap}
Introduced in \cite{efron1979bootstrap}, the bootstrap is a general method 
for estimating standard errors 
and for computing confidence intervals.
We focus on the latter in this paper, but refer the 
interested reader to \cite{efron2001empirical,davison1997bootstrap}, and \cite{van2000asymptotic}
for more details on the versatility of the bootstrap.

Let $X_1, \dots, X_n$ be independent and identically distributed random variables taking values in the measure space $(\mathbb{X}, \mathcal{X}, P)$. 
Suppose we are interested in estimating the real-valued parameter~$\theta$ corresponding to the distribution $P$ of the observation. 
We estimate $\theta$ using the statistic $\hat \theta=g(X_1, \dots, X_n)$, which is
some function of the data. For example, $\theta$ and $\hat \theta$ could be the population mean and the sample mean, respectively. 
The distribution of the difference~\mbox{$\hat \theta-\theta$} contains all the information that we need to construct a confidence interval of level $1-\alpha$ for $\theta$;
that is, an interval $[a,b]$ depending on the data $X_1, \dots, X_n$ such that
$
\mathbb{P}(\theta \in [a,b]) ~\geq~ 1-\alpha.
$ 
If we knew the cumulative distribution $F$ of $\hat \theta-\theta$, then the quantiles
$F^{-1}(1- \alpha/2)$ and $F^{-1}(\alpha/2)$ can be computed.
Furthermore, setting
$a= \hat \theta - F^{-1}(1-\alpha/2)$ and $b= \hat \theta -F^{-1}(\alpha/2)$,
we immediately obtain a $(1-\alpha)$-confidence interval for $\theta$:
\begin{align*}
\mathbb{P}(\theta \in [a,b]) 
&=  \mathbb{P}\left(F^{-1}\left(\frac{\alpha}{2}\right)  
              ~\leq~ \hat \theta - \theta ~\leq~ F^{-1}\left(1-\frac{\alpha}{2} \right)  \right)=1-\alpha.
\end{align*}
Unfortunately, the distribution of $\hat \theta-\theta$ depends on the unknown distribution $P$.

In the first step in the bootstrap procedure, we approximate the unknown $P$ 
with the empirical measure $P_n$ that puts mass $1/n$ at each $X_i$ in the sample. Let $X_1^*, \dots, X_n^*$ be a sample of size $n$ from $P_n$. Equivalently,
we can think of drawing $X_1^*, \dots, X_n^*$ from $X_1, \dots, X_n$ with replacement.
We estimate the distribution $F( r)$ with the distribution $\widehat F ( r)= P_n(\hat \theta^* - \hat \theta \leq r)$, 
where $\hat \theta^*=g(X_1^*, \dots, X_n^*)$.

The distribution $\widehat F$ is still not analytically computable, 
but can be approximated by simulation: for large $B$, obtain $B$ 
different values of $\hat \theta^\ast$ and approximate $\widehat F( r)$, and 
hence $F(r)$, with
\mbox{$
\widetilde F( r)= \frac{1}{B} \sum_{j=1}^B I(\hat \theta_{j}^* - \hat \theta \leq r).
$}
Since the quantiles of $\widetilde F$ approximate the quantiles of $F$, we define the estimated confidence interval as
\begin{equation}
\label{eq:interval}
C_n=\left[\hat \theta - \widetilde F_n^{-1}(1- \alpha/2)\; , \; \hat \theta - \widetilde F_n^{-1}(\alpha/2)  \right].
\end{equation}

\noindent In summary, the standard bootstrap procedure is: 
\begin{enumerate}
\item Compute the estimate $\hat \theta= g(X_1, \dots, X_n)$.
\item Draw $X_1^*, \dots, X_n^*$ from $P_n$ and compute $\hat \theta^*=g(X_1^*, \dots, X_n^*)$. 
\item Repeat the previous step $B$ times to obtain $\hat\theta_{1}^*, \dots, \hat \theta_{B}^*$.
\item Compute the quantiles of $\widetilde F$ and construct the confidence interval $C_n$.
\end{enumerate}
There are two sources of error in the Bootstrap procedure. We first approximate $F$ with $\widehat F$ 
and then we estimate $\widehat F$ by simulation. The second error can be made arbitrarily small, 
by choosing $B$ large enough. Therefore,
this error is usually ignored in the theory of the bootstrap.
\\Formally, one has to show that 
$
\sup_r \left | \widetilde F ( r) - F( r) \right | \stackrel{P}{\to}0 \; ,
$
 which implies that the confidence interval $C_n$, defined in \eqref{eq:interval}, is \textit{asymptotically consistent} at level $1-\alpha$; that is, 
$
\liminf_{n \to \infty} \mathbb{P}(\theta \in C_n) \geq 1-\alpha.
$

\subsection{The Bootstrap Empirical Process}\label{ss:epboot}
When a random variable is a function rather than a real value,
the bootstrap procedure described above can be used to construct
a confidence interval for the function evaluated at a particular
element of the domain.
Instead, we use the \emph{bootstrap empirical process}, which can be 
used to find a confidence band for a function $h(t)$; that is, we find
a pair of functions $a(t)$ and $b(t)$ such that
the probability that $ h(t) \in [a(t), b(t)]$  for all $t$
is at least $1-\alpha$.
We describe this technique below, but refer the reader to 
\cite{van1996weak} and \cite{kosorok2008introduction} for more details.

An \textit{empirical process} is a stochastic process based on a random sample. Let $X_1, \dots, X_n$ be 
independent and identically distributed random variables taking values in the measure 
space $(\mathbb{X}, \mathcal{X}, P)$. For a measurable function $f: \mathbb{X} \to \mathbb{R}$,
we denote $Pf= \int f dP$ and $P_nf= \int f dP_n= n^{-1} \sum_{i=1}^n f(X_i)$. 
By the law of large numbers $P_nf$ converges almost surely to $Pf$.
Given a class $\mathcal{F}$ of measurable functions, we define the empirical process $\mathbb{G}_n$ indexed by $\mathcal{F}$ as 
$$
\left\{ \mathbb{G}_n f \right\}_{f \in \mathcal{F}}= \left\{ \sqrt{n} (P_nf-Pf) \right\}_{f \in \mathcal{F}}.
$$
\begin{example} 
If $\mathcal{F}=\{I(x\leq t ) \}_{t \in \R}$,
then $\{P_nf\}_{ f \in \mathcal{F}}= \{n^{-1} \sum_{i=1}^n I(X_i \leq t)\}_{ t \in \mathbb{R}}$, 
which is the empirical distribution function seen as a stochastic process indexed by~$t$.
Furthermore, we have
$\{\mathbb{G}_n f \}_{f \in \mathcal{F}} = 
     \{ n^{-1/2} \sum_{i=1}^n I(X_i \leq t ) - P(X_i \leq t) \}_{t \in \mathbb{R}} $.
\end{example}

\begin{definition}
\label{def:donsker}
A class $\mathcal{F}$ of measurable functions $f: \mathbb{X} \to \mathbb{R}$ is 
called $P$-Donsker if the process $\{ \mathbb{G}_n f\}_{f \in \mathcal{F}}$ 
converges in distribution to a limit process in the space~$\ell^\infty(\mathcal{F})$, 
where $\ell^\infty(\mathcal{F})$ is the collection of all bounded functions 
$f: \X \to \mathbb{R}$. \\
The limit process is a Gaussian process $\mathbb{G}$ with 
zero mean and covariance function \mbox{$\text{E } \mathbb{G}f\mathbb{G}g := Pfg -PfPg$};
this process is known as a Brownian Bridge.
\end{definition}

Let $P_n^*f = n^{-1} \sum_{i=1}^n f(X_i^*)$ where $\{X_1^*, \dots, X_n^*\}$ is a bootstrap sample from $P_n$, the measure that puts mass $1/n$ on each element of the sample $\{X_1, \dots, X_n \}$.
The \textit{bootstrap empirical process} $\mathbb{G}_n^*$ indexed by $\mathcal{F}$ is defined as 
$$
\{ \mathbb{G}_n^* f \}_{f \in \mathcal{F}} = \{ \sqrt{n} (P_n^*f-P_nf) \}_{f \in \mathcal{F}}.
$$

\begin{theorem} [Theorem 2.4 in \cite{gine1990bootstrapping}] 
\label{th:bootProcess}
$\mathcal{F}$ is $P$-Donsker if and only if
$\mathbb{G}_n^*$ converges in distribution to $\mathbb{G}$ in $\ell^\infty({\mathcal{F}})$.
\end{theorem}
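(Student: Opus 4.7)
The plan is to split the equivalence into two implications and, in both, to exploit the representation of the bootstrap process through multipliers. Writing $(M_{n,1},\dots,M_{n,n}) \sim \text{Multinomial}(n;\,1/n,\dots,1/n)$ for the counts of each original datum in the resample, one has the identity
$$
\mathbb{G}_n^* f \;=\; \frac{1}{\sqrt{n}} \sum_{i=1}^n (M_{n,i}-1)\, f(X_i),
$$
so that the bootstrap process is a symmetrized, conditionally centered, multiplier-type empirical process. This representation is the bridge between the two directions.

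For the forward direction, I would first replace the multinomial weights by i.i.d.\ Poisson$(1)$ weights $N_{n,i}$; since $\sum_i N_{n,i}$ concentrates around $n$ with fluctuations of order $\sqrt{n}$, a standard Poissonization argument shows that the Poissonized process is asymptotically equivalent in $\ell^\infty(\mathcal{F})$ to $\mathbb{G}_n^*$. The Poissonized process is a genuine sum of i.i.d.\ mean-zero multipliers against $f(X_i)$, so the conditional multiplier central limit theorem (e.g.\ the version of Ledoux--Talagrand and its form in van der Vaart--Wellner) applies: assuming $\mathcal{F}$ is $P$-Donsker, conditionally on $X_1,X_2,\dots$ the multiplier process converges in distribution to the same Brownian bridge $\mathbb{G}$, in outer probability. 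Unconditioning and de-Poissonizing then yields convergence of $\mathbb{G}_n^*$ to $\mathbb{G}$ in $\ell^\infty(\mathcal{F})$.

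For the reverse direction, I would extract the Donsker property from the bootstrap convergence in two pieces. The finite-dimensional marginal CLT for $\mathbb{G}_n$ falls out quickly: convergence of $\mathbb{G}_n^* f$ to a Gaussian and a direct variance computation using $\text{Var}(M_{n,i}-1) \to 1$ force the marginal CLT for $\mathbb{G}_n$ along arbitrary linear combinations, via a Lindeberg argument. The harder piece is asymptotic $\rho_P$-equicontinuity of $\{\mathbb{G}_n f\}_{f\in\mathcal{F}}$. Here one uses that tightness of $\mathbb{G}_n^*$ in $\ell^\infty(\mathcal{F})$ forces control of the random $L_2(P_n)$ covering numbers of $\mathcal{F}$; combined with a symmetrization inequality comparing $\mathbb{G}_n$ to a Rademacher multiplier process, this bootstraps back to asymptotic equicontinuity of $\mathbb{G}_n$ itself, completing the Donsker property.

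The main obstacle, in my view, is not the multiplier CLT but the measurability overhead: $\ell^\infty(\mathcal{F})$ is typically non-separable, so everything must be phrased in terms of outer expectations and Hoffmann--J\o rgensen weak convergence, and the statement ``$\mathbb{G}_n^* \Rightarrow \mathbb{G}$'' has to be interpreted as convergence in bounded-Lipschitz distance of the conditional law $\mathcal{L}(\mathbb{G}_n^* \mid X_1,\dots,X_n)$ to $\mathcal{L}(\mathbb{G})$ in (outer) probability. Keeping this bookkeeping consistent across the Poissonization, de-Poissonization, and symmetrization steps is where I would expect to spend most of the effort; once the right framework is fixed, the analytic estimates are standard.
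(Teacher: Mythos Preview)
The paper does not contain a proof of this theorem: it is quoted verbatim as Theorem~2.4 of Gin\'e and Zinn (1990) and used as a black box in the subsequent construction of confidence bands. There is therefore no ``paper's own proof'' to compare your proposal against.

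That said, your sketch is broadly faithful to the actual Gin\'e--Zinn argument: the multinomial-to-Poisson replacement, the conditional multiplier CLT for the forward direction, and the recovery of asymptotic equicontinuity via symmetrization for the converse are indeed the main ingredients of the original proof. Your caveat about measurability and the Hoffmann--J\o rgensen framework is also well placed; in the original paper (and in the van der Vaart--Wellner treatment) this is handled precisely by interpreting bootstrap convergence as convergence of the conditional bounded-Lipschitz distance in outer probability, as you indicate. If anything, the reverse implication in Gin\'e--Zinn is somewhat more delicate than your sketch suggests: extracting equicontinuity of $\mathbb{G}_n$ from tightness of $\mathbb{G}_n^*$ requires a careful desymmetrization (not just a symmetrization inequality in one direction), and this is where the original proof does real work. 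But as a high-level plan your outline is sound; it simply goes well beyond what the present paper attempts.
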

In words, Theorem \ref{th:bootProcess} states that $\mathcal{F}$ is $P$-Donsker if and only if
 the bootstrap empirical process converges in distribution to the limit process $\mathbb{G}$ given in Definition \ref{def:donsker}.
Suppose we are interested in constructing a confidence band of level $1-\alpha$ for $\{Pf\}_{ f \in \mathcal{F} }$, where $\mathcal{F}$ is $P$-Donsker. Let $\hat \theta=\sup_{f \in \mathcal{F}} \left | \mathbb{G}_nf \right |$. 
We proceed as follows: 
\begin{enumerate}
\item Draw $X_1^*, \dots, X_n^*$ from $P_n$ and compute $\hat \theta^* = \sup_{f \in \mathcal{F}} \left | \mathbb{G}_n^*f \right |$. 
\item Repeat the previous step $B$ times to obtain $\hat\theta_{1}^*, \dots, \hat \theta_{B}^*$.
\item Compute 
$
q_{\alpha} = \inf \left \{q: \frac{1}{B} \sum_{j=1}^B I(\hat \theta_j^* \geq q) \leq \alpha \right\}.
$
\item For $f \in \mathcal{F}$ define the confidence band 
$
C_n(f)= \left[P_nf- \frac{q_{\alpha}}{\sqrt{n}} \,, \, P_nf+ \frac{q_{\alpha}}{\sqrt{n}}\right] .
$
\end{enumerate}
A consequence of Theorem \ref{th:bootProcess} is that, for large $n$ and $B$, the interval $[0,q_{\alpha}]$ has coverage $1-\alpha$ for $\hat \theta$ and the band $C_n(f)_{f \in \mathcal{F}}$ has coverage $1-\alpha$ for $\{Pf\}_{f \in \mathcal{F}}$.


\section{Applications of the Bootstrap}\label{sec:boot4pers}
In this section, we apply the bootstrap from 
the previous section to persistence diagrams, as
well as to persistence landscapes.

\subsection{Persistence Diagrams}\label{ss:dgmbootstrap}
Let $X_1,\ldots, X_n$ be a sample from the distribution $P$, supported on a smooth manifold $\mathbb{X} \subset \mathbb{R}^D$.
Let
$
p_h(x) = \int_{\mathbb{X}} 
\frac{1}{h^D} K\left( \frac{||x-u||}{h}\right) dP(u),
$ 
where $K: \mathbb{R} \to \mathbb{R}$ is an integrable function satisfying $\int K(u) du =1$
and $K(u)$ is nonnegative for all $u$;
thus $p_h$ is a probability distribution.
The function $K$ is called a \textit{kernel} and the parameter $h>0$  is its \textit{bandwidth}. 
Then $p_h$ is the density of the probability measure
$P_h$ which is the convolution
$P_h = P\star \mathbb{K}_h$
where
$\mathbb{K}_h(A) = h^{-D} \mathbb{K}(h^{-1} A)$ and
$\mathbb{K}(A) = \int_A K(t) dt$.
$P_h$
is a smoothed version of $P$.

Our target of inference in this section is
${\cal P}_h$, the persistence diagram
of the superlevel sets of $p_h$.
The standard estimator for $p_h$ is the kernel density estimator
$$
\hat p_h(x) =
\frac{1}{n}\sum_{i=1}^n
\frac{1}{h^D}\,K\left(\frac{||x-X_i||}{h}\right);
$$
notice that if $X_i$ are fixed, then $\hat p_h$ is a porbability distribution.
Let $\mathcal{\widehat P}_h$ be the corresponding persistence diagram.
We wish to find a confidence set for $\mathcal{P}_h$, i.e.~, 
an interval $[0,c_n]$ such that
$\limsup_{n\to\infty}\mathbb{P}(W_\infty(\mathcal{\widehat P}_h, \mathcal{P}_h) \in  [0,c_n] ) \geq 1-\alpha$.
From Theorem \ref{th:stability} (Stability),
it suffices to find $c_n$ such that 
$
\limsup_{n\to\infty}\mathbb{P}(\Vert \hat p_h - p_h\Vert_\infty >  c_n) \leq \alpha.
$

To find $c_n$, we use the bootstrap. 
Let $\mathcal{F}=\left\{f_x(u)=\frac{1}{h^D} K\left( \frac{\Vert x-u \Vert}{h} \right) \right\}_{x \in \X}$. 
Using the notation of Section \ref{ss:epboot}, it follows that $Pf_x= p_h(x)$, $P_nf_x=\hat p_h(x)$ and $\hat \theta=\sup_{f_x \in \mathcal{F}}|\mathbb{G}_n f_x| = \sqrt{n} \Vert \hat p_h - p_h \Vert_\infty$. 
The approximated $1-\alpha$ quantile $q_\alpha$ can be obtained through simulation, i.e.,
$q_{\alpha} = \inf \{ q ~:~ \frac1B \sum_{j=1}^B I(\sqrt{n} || \hat{p}_n^j - \hat{p}_n || \geq q) \leq \alpha \}$,
where $p_h^j(x)$ denotes the probability distribution corresponding to the $j^{th}$
bootstrap sample. The following result holds under suitable regularity 
conditions on the kernel $K$ for which $\mathcal{F}$ is Donsker;
see \cite{gine2002rates}.
\begin{theorem}[Lemma 15 in~\cite{arxiv_stathom}] 
We have that
$$ \;\;
\limsup_{n \to \infty} \mathbb{P}\Bigl(\sqrt{n} \Vert \hat p_h - p_h \Vert_\infty >  q_\alpha\Bigr) ~\leq~ \alpha. 
$$
\end{theorem}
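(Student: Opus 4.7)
The plan is to deduce this coverage statement from \thmref{th:bootProcess} applied to the specific kernel-indexed function class. First, I would recast the quantity of interest in empirical-process language. With $\mathcal{F} = \{f_x(u) = h^{-D} K(\Vert x-u\Vert/h) : x \in \mathbb{X}\}$ and the notation $\mathbb{G}_n f = \sqrt{n}(P_n f - Pf)$ already developed in \secref{ss:epboot}, one has the identities $Pf_x = p_h(x)$ and $P_n f_x = \hat p_h(x)$, so that
\begin{equation*}
\hat\theta \;=\; \sqrt{n}\,\Vert \hat p_h - p_h\Vert_\infty \;=\; \sup_{f\in\mathcal{F}} |\mathbb{G}_n f|,
\qquad
\hat\theta^* \;=\; \sup_{f\in\mathcal{F}} |\mathbb{G}_n^* f|.
\end{equation*}
Thus both $\hat\theta$ and its bootstrap analogue $\hat\theta^*$ arise from the single continuous functional $T\colon z \mapsto \sup_{f\in\mathcal{F}}|z(f)|$ on $\ell^\infty(\mathcal{F})$, which is $1$-Lipschitz in the sup norm and hence continuous.

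Second, I would appeal to the regularity conditions on $K$ cited from \cite{gine2002rates} to conclude that, for fixed bandwidth $h>0$, the class $\mathcal{F}$ is $P$-Donsker. This is the single non-trivial input and is the step I expect to be the main obstacle: it requires the scaled kernel class to satisfy a VC-type or uniform entropy bound with an integrable envelope, which is precisely the content of the Giné--Guillou results under the standard smoothness assumptions on $K$ invoked here. Given this Donsker property, \thmref{th:bootProcess} yields that $\mathbb{G}_n^* \Rightarrow \mathbb{G}$ in $\ell^\infty(\mathcal{F})$, and of course $\mathbb{G}_n \Rightarrow \mathbb{G}$ by definition of Donsker.

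Third, I would apply the continuous mapping theorem to $T$ to obtain $\hat\theta \Rightarrow T(\mathbb{G})$ unconditionally and $\hat\theta^* \Rightarrow T(\mathbb{G})$ conditionally on the data in probability. The supremum of the centered Gaussian process $\mathbb{G}$ over $\mathcal{F}$ has an absolutely continuous distribution on $(0,\infty)$ (standard consequence of the non-degenerate covariance of $\mathbb{G}$ and, for instance, Tsirelson's lemma), so its $(1-\alpha)$-quantile $q^*$ is a continuity point of the limiting cdf. Consequently the bootstrap quantile $q_\alpha$ defined in \secref{ss:epboot} converges in probability to $q^*$ as $n \to \infty$ (taking $B$ sufficiently large, as discussed in \secref{ss:bootstrap}).

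Finally, I would combine these two convergences via a Slutsky-type argument: since $\hat\theta \Rightarrow T(\mathbb{G})$ and $q_\alpha \stackrel{P}{\to} q^*$, the Portmanteau theorem applied at the continuity point $q^*$ gives
\begin{equation*}
\limsup_{n\to\infty} \mathbb{P}\bigl(\hat\theta > q_\alpha\bigr) \;\leq\; \mathbb{P}\bigl(T(\mathbb{G}) > q^*\bigr) \;=\; \alpha,
\end{equation*}
which is exactly the claimed inequality. The Donsker verification for $\mathcal{F}$ is the crux; everything else is the standard Giné--Zinn bootstrap machinery plus continuous mapping and quantile convergence.
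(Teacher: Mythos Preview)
Your proposal is correct and follows exactly the approach the paper itself sketches: the paper does not give a standalone proof but, immediately before the theorem, recasts $\sqrt{n}\Vert\hat p_h - p_h\Vert_\infty$ as $\sup_{f\in\mathcal{F}}|\mathbb{G}_n f|$ for the kernel-indexed class $\mathcal{F}$, invokes the Gin\'e--Guillou regularity conditions to obtain the Donsker property, and then leans on \thmref{th:bootProcess} from \secref{ss:epboot}. You have simply made explicit the continuous-mapping, quantile-convergence, and Portmanteau steps that the paper leaves to the reader, so there is nothing substantively different to compare.
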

By the Stability Theorem, we conclude:
$ \;\;
\lim_{n \to \infty} \mathbb{P}\left( W_\infty(\widehat{\cal P}_h,{\cal P}_h) >     \frac{q_\alpha}{\sqrt{n}}\right) \leq \alpha.
$

\begin{example}[Torus]\label{ex:torus}
We embed the torus $\mathbb{S}^1 \times \mathbb{S}^1$ in $\R^3$ and we use the 
rejection sampling algorithm of \cite{diaconis2012sampling} ($R=1.5, r=0.8$) to sample 
$10,000$ points uniformly from the torus. 
Then, we compute the persistence diagram $\mathcal{\widehat P}_h$ using
the Gaussian kernel with bandwidth $h=0.25$
and use the bootstrap to construct the $0.95\%$ confidence interval $[0 \, , \, 0.01]$ 
for $W_\infty(\mathcal{\widehat P}_h, \mathcal{P}_h)$;
see Figure \ref{fig:torus}.  Notice that the confidence set
correctly captures the topology of the torus. That is, only the points representing 
real features of the torus are significantly far from the horizontal axis.
\end{example}

\begin{figure}
 \centering
 \includegraphics[height=2.3in]{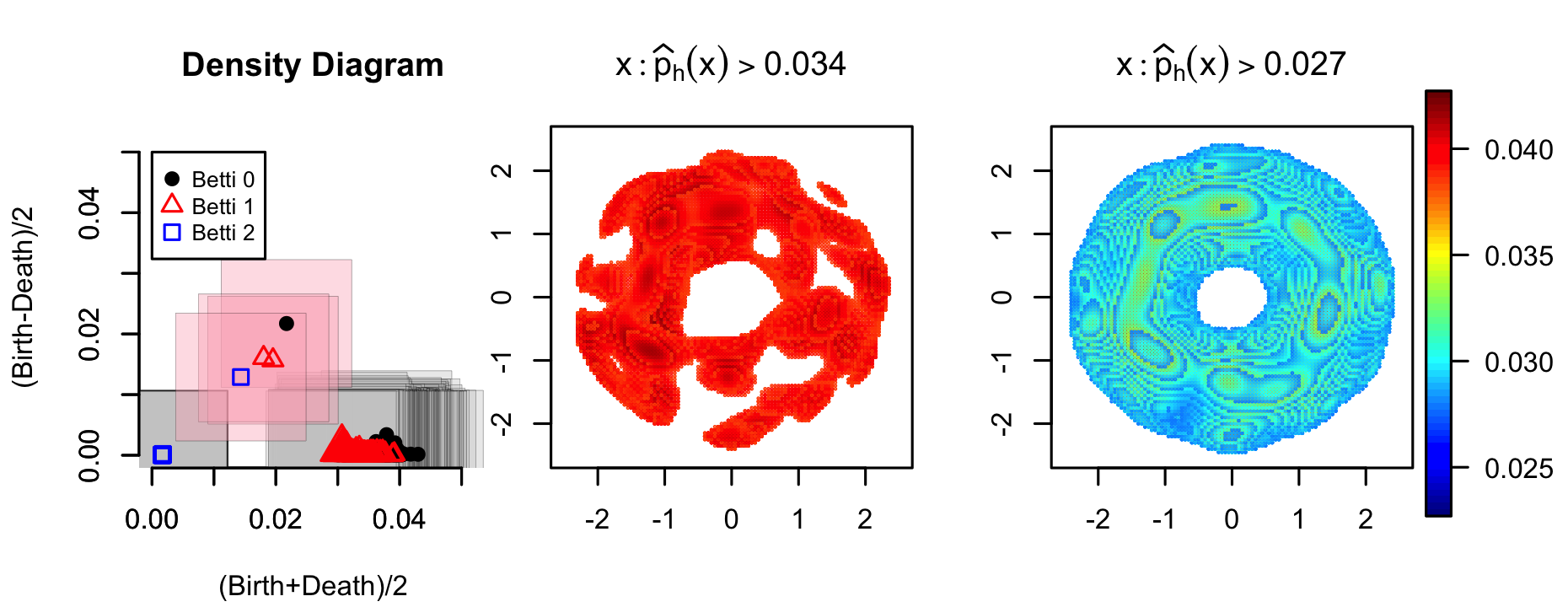}
 \caption{Left: Persistence Diagram of the superlevel sets of 
 a kernel density estimator on the 3D torus described in Example \ref{ex:torus}. 
 The boxes of side $=2 \times 0.01$ around the points represent the 95\% confidence set for $\mathcal{P}_h$. 
 Middle: 2D projection of the superlevel set $\{x: \hat p_h(x)> 0.034\}$. Right: 2D projection of the superlevel set $\{x: \hat p_h(x)> 0.027\}$. }
 \label{fig:torus}
\end{figure}

\subsection{Landscapes}\label{ss:landscapeboot}
Let the diagrams $\mathcal{P}_1, \ldots,\mathcal{P}_n  $ be a sample from
the distribution $P$ over the space of persistence diagrams $\mathcal{D}_T$.
Thus, by definition, we have $x+y \leq T < \infty$ and $0 \leq y \leq T/2$
for all $(x,y) \in \cup_i \mathcal{P}_i$.

Let $\mathcal{L}_1, \ldots, \mathcal{L}_n$ be the landscape functions
corresponding to $\mathcal{P}_1, \dots, \mathcal{P}_n $. That is,  
$\mathcal{L}_i(t) = \mathcal{L}_{\mathcal{P}_i}(1,t)$, as defined in~\eqref{eq:landscape}. 
We define the \textit{mean landscape}
$
\mu(t) = \mathbb{E}_{P}[ \mathcal{L}_{i}(t)],
$ 
and the \textit{empirical mean landscape}
$
\overline{\cal L}_n(t) = \frac{1}{n}\sum_{i=1}^n \mathcal{L}_{i}(t).
$ 
In this section, we show that the process
$
\sqrt{n}(\overline{\cal L}_n(t) - \mu(t))
$ 
converges to a Gaussian process, so that
we may use the procedure given in~\secref{ss:epboot}.

Let
${\cal F} = \{ f_t ~:~ 0\leq t\leq T\}$,
where
$f_t:{\cal D}\to \mathbb{R}$ is defined by
$f_t(\mathcal{P}) = \mathcal{L}_\mathcal{P}(1,t)$.  We note here that
$f_t(\mathcal{P}) = 0$ if $t \notin (0,T)$.
We can now write
$\sqrt{n}(\overline{\cal L}_n(t) - \mu(t))$ as an empirical process indexed by $t \in [0,T]$ :
$$
\sqrt{n}(\overline{\cal L}_n(t) - \mu(t)) =  \sqrt{n} \left( \frac{1}{n}\sum_{i=1}^n \mathcal{L}_i(t) - \mu(t)\right)=\sqrt{n}(P_nf_t - Pf_t)\equiv \mathbb{G}_nf _t.
$$
We note that
the constant function
$F(\mathcal{P}) = T/2$ is a measurable envelope for ${\cal F}$.

Given a probability measure $Q$ over $\mathcal{F}$, let
$\Vert f-g \Vert_{Q,2} = \sqrt{\int |f-g|^2 dQ}$ and let
$N({\cal F},L_2(Q),\varepsilon)$ be the covering number of $\mathcal{F}$, that is,
the size of the smallest $\varepsilon$-net in this~metric.

\begin{lemma}[Theorem 2.5 in \cite{kosorok2008introduction}]
\label{lemma:covering}
Let $\mathcal{F}$ be a class of measurable functions satisfying
$
\int_0^1 \sqrt{\log \sup_Q N(\mathcal{F}, L_2(Q), \varepsilon \Vert F \Vert_{Q,2})} d\varepsilon < \infty \, ,
$
where $F$ is a measurable envelope of $\mathcal{F}$ and the supremum is taken over all finitely discrete probability measures $Q$ with $\Vert F \Vert_{Q,2} >0$. If $PF^2 < \infty$, then $\mathcal{F}$ is $P$-Donsker.
\end{lemma}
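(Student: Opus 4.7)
The plan is to establish that $\mathcal{F}$ is $P$-Donsker by verifying the two ingredients of weak convergence of $\mathbb{G}_n$ in $\ell^\infty(\mathcal{F})$: (i) convergence of all finite-dimensional marginals to those of the centered Gaussian process $\mathbb{G}$ with covariance $Pfg - PfPg$, and (ii) asymptotic tightness of $\{\mathbb{G}_n\}$, which by the standard weak convergence criterion in $\ell^\infty(\mathcal{F})$ reduces to asymptotic equicontinuity with respect to the intrinsic semimetric $\rho(f,g) = \sqrt{P(f-g)^2}$ together with total boundedness of $(\mathcal{F},\rho)$.

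The finite-dimensional step is immediate: for any $f_1,\ldots,f_k \in \mathcal{F}$, the envelope condition $PF^2 < \infty$ forces $Pf_j^2 \leq PF^2 < \infty$, so the classical multivariate central limit theorem yields $(\mathbb{G}_n f_1,\ldots,\mathbb{G}_n f_k) \Rightarrow N(0,\Sigma)$ with the correct covariance structure.

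The main work is the equicontinuity. I would first symmetrize: by the standard symmetrization inequality, for any $\delta>0$,
\[
\mathbb{E}^* \sup_{\rho(f,g)<\delta} |\mathbb{G}_n f - \mathbb{G}_n g| ~\lesssim~ \mathbb{E}^* \sup_{\rho(f,g)<\delta} \left| \frac{1}{\sqrt{n}} \sum_{i=1}^n \varepsilon_i \bigl(f(X_i)-g(X_i)\bigr) \right|,
\]
where $\varepsilon_i$ are i.i.d.\ Rademacher variables independent of the sample. Conditional on $X_1,\ldots,X_n$, the symmetrized process is sub-Gaussian with respect to the random semimetric $\rho_n(f,g) = \sqrt{P_n(f-g)^2}$, so Dudley's chaining bound yields, for a suitable random radius $\delta_n$,
\[
\mathbb{E}_\varepsilon \sup_{\rho_n(f,g)<\delta_n} \Bigl| \cdot \Bigr| ~\lesssim~ \int_0^{\delta_n} \sqrt{\log N(\mathcal{F}, L_2(P_n), \varepsilon)} \, d\varepsilon.
\]
Renormalizing by $\|F\|_{P_n,2}$ and invoking the uniform entropy hypothesis to dominate the integrand by $\sqrt{\log \sup_Q N(\mathcal{F}, L_2(Q), \varepsilon\|F\|_{Q,2})}$, a change of variables puts the right-hand side in the form of the finite entropy integral assumed in the statement, and dominated convergence (using $PF^2 < \infty$) lets me send $\delta \downarrow 0$.

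The step I expect to be the main technical obstacle is the passage from the deterministic radius $\rho(f,g)<\delta$ controlling the supremum to the empirical radius $\rho_n$ required by the conditional chaining bound: this demands a uniform comparison of empirical and population $L_2$ norms on $\{f-g : f,g \in \mathcal{F}\}$, which I would obtain via a Glivenko--Cantelli argument applied to the squared-difference class, bounded above by $4F^2 \in L_1(P)$ and inheriting an entropy bound from $\mathcal{F}$. Once this comparison is in hand, combining the finite-dimensional convergence, the equicontinuity estimate, and the total boundedness of $(\mathcal{F},\rho)$ (which is a direct consequence of the entropy integral being finite at a single scale) yields the weak convergence of $\mathbb{G}_n$ to the tight Gaussian process $\mathbb{G}$ in $\ell^\infty(\mathcal{F})$, as required.
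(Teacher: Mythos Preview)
The paper does not prove this lemma at all: it is quoted verbatim as Theorem~2.5 of Kosorok and used as a black box in the subsequent theorem on weak convergence of landscapes. So there is no ``paper's own proof'' to compare against.

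That said, your sketch is the standard textbook argument (essentially the proof in Kosorok or in van der Vaart--Wellner): finite-dimensional convergence from the multivariate CLT using $PF^2<\infty$, symmetrization, conditional sub-Gaussianity of the Rademacher process with respect to the empirical $L_2$ metric, Dudley's entropy integral bound, and then the uniform entropy hypothesis to replace the random covering number by the deterministic supremum over $Q$. Your identification of the main technical wrinkle---controlling the mismatch between the population radius $\rho(f,g)<\delta$ and the empirical radius $\rho_n$ used in the chaining---is accurate; in the references this is handled either by a truncation-plus-Glivenko--Cantelli argument on the class of squared differences (bounded by $4F^2\in L_1(P)$), or more directly by bounding the chaining integral from $0$ to $\infty$ and using dominated convergence on the tail. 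Either route works; your plan is sound.
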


\begin{theorem}[Weak Convergence of Landscapes]
Let $\mathbb{G}$ be a Brownian bridge
with covariance function
$
\kappa(t,u) = \int f_t(\mathcal{P})f_u(\mathcal{P}) dP(\mathcal{P}) - \int f_t(\mathcal{P}) dP(\mathcal{P}) \int f_u(\mathcal{P}) dP(\mathcal{P}).
$ 
Then,
$\mathbb{G}_n$ converges in distribution to $\mathbb{G}.$
\end{theorem}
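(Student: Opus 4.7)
The plan is to verify that the class $\mathcal{F} = \{f_t : 0 \le t \le T\}$ satisfies the hypotheses of the covering-number lemma (Lemma~\ref{lemma:covering}), which immediately yields that $\mathcal{F}$ is $P$-Donsker, and hence that $\mathbb{G}_n$ converges in distribution to a mean-zero Gaussian process whose covariance is obtained from the definition of $P$-Donsker in Definition~\ref{def:donsker}, giving exactly the stated $\kappa(t,u)$.

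First I would pin down a uniform envelope. Because every diagram $\mathcal{P} \in \mathcal{D}_T$ satisfies $0 \le y \le T/2$, and because $\mathcal{L}_\mathcal{P}(1,t) = \max_{p \in \mathcal{P}} t_p(t)$ is bounded above by $\max_p y_p \le T/2$ and below by $0$, the constant function $F \equiv T/2$ is a measurable envelope for $\mathcal{F}$. In particular, $PF^2 = T^2/4 < \infty$, and for any probability measure $Q$ we have $\|F\|_{Q,2} = T/2 > 0$, so the envelope hypothesis of Lemma~\ref{lemma:covering} holds trivially.

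Next I would exploit the $1$-Lipschitz property of $\mathcal{L}_\mathcal{P}(1,\cdot)$ noted after equation~\eqref{eq:landscape}. For every $\mathcal{P}$ and every $t,u \in [0,T]$,
\begin{equation*}
|f_t(\mathcal{P}) - f_u(\mathcal{P})| = |\mathcal{L}_\mathcal{P}(1,t) - \mathcal{L}_\mathcal{P}(1,u)| \le |t-u|,
\end{equation*}
so for any probability measure $Q$ on $\mathcal{D}_T$, $\|f_t - f_u\|_{Q,2} \le |t-u|$. Consequently, any $\delta$-net of the interval $[0,T]$ in the usual metric induces a $\delta$-net of $\mathcal{F}$ in $L_2(Q)$. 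Setting $\delta = \varepsilon \|F\|_{Q,2} = \varepsilon T/2$ gives the uniform covering bound
\begin{equation*}
\sup_Q N(\mathcal{F}, L_2(Q), \varepsilon \|F\|_{Q,2}) \le \left\lceil \frac{T}{\varepsilon T/2} \right\rceil = \left\lceil \frac{2}{\varepsilon} \right\rceil,
\end{equation*}
so that $\log \sup_Q N(\mathcal{F}, L_2(Q), \varepsilon \|F\|_{Q,2}) \le \log(1 + 2/\varepsilon)$. The integral $\int_0^1 \sqrt{\log(1 + 2/\varepsilon)}\, d\varepsilon$ is finite (the integrand has only a mild logarithmic singularity at $0$), so the entropy hypothesis of Lemma~\ref{lemma:covering} is satisfied.

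Having verified both hypotheses, Lemma~\ref{lemma:covering} yields that $\mathcal{F}$ is $P$-Donsker. By Definition~\ref{def:donsker} the empirical process $\mathbb{G}_n$ then converges in distribution in $\ell^\infty(\mathcal{F})$ to a mean-zero Gaussian process whose covariance at the index pair $(f_t, f_u)$ is $Pf_tf_u - Pf_t\, Pf_u$, which is precisely $\kappa(t,u)$. I do not anticipate a serious obstacle: once the envelope bound $T/2$ and the $1$-Lipschitz property of the landscape are in hand, everything reduces to an elementary covering-number computation. The only subtlety worth double-checking is measurability of the map $\mathcal{P} \mapsto \mathcal{L}_\mathcal{P}(1,t)$ on $\mathcal{D}_T$ so that $f_t$ is a bona fide measurable function; this is standard given the pointwise maximum representation in~\eqref{eq:landscape}.
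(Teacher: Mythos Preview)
Your proposal is correct and follows essentially the same approach as the paper: both use the $1$-Lipschitz property of landscapes to transfer an $\varepsilon T/2$-net on $[0,T]$ to an $L_2(Q)$-net on $\mathcal{F}$, obtain the covering bound of order $2/\varepsilon$, verify the entropy integral and the envelope condition $F\equiv T/2$, and then invoke Lemma~\ref{lemma:covering}. Your write-up is in fact slightly more careful (the ceiling in the covering bound, the explicit identification of $\kappa$, and the measurability remark), but the argument is the same.
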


\begin{proof}[Proof]
Since persistence landscapes are $1$-Lipschitz, we have
$
\Vert f_t -f_u \Vert_{Q,2} \leq |t-u|.
$ 
Construct a regular grid
$0\equiv t_0 < t_1 < \cdots < t_N \equiv T$,
where $t_{j+1} - t_{j} = \varepsilon \Vert F \Vert_{Q,2} = \varepsilon \, T/2$.
We claim that
 $\{f_{t_j} : 1\leq j \leq N\}$ is an $(\varepsilon \, T/2)$-net for ${\cal F}$:
choose $f_t \in \cal F$; then there is a $j$ so that $t_j \leq t \leq t_{j+1}$ and
$
\Vert f_{t_{j+1}} -  f_t \Vert_{Q,2} \leq |t_{j+1}-t| \leq |t_{j+1}-t_j|=\varepsilon \, T/2.
$ 
The fact that $\{f_{t_j}: 1\leq j \leq N \}$ is an $(\varepsilon \, T/2)$-net
implies
$
\sup_Q N({\cal F},L_2(Q), \varepsilon \Vert F \Vert_{Q,2}) \leq 2/\varepsilon.
$ 
Hence,
$
\int_0^1 \sqrt{\log \sup_Q N({\cal F},L_2(Q),\varepsilon \Vert F \Vert_2 )}d\varepsilon < \infty.
$
$F={T/2}$ is trivially square-integrable. By Lemma \ref{lemma:covering}, $\mathbb{G}_n$ converges in distribution to $\mathbb{G}$.
\end{proof}

Now that we have shown that $\mathbb{G}_n$ converges to
a Gaussian process, we can follow the procedure outlined 
in~\secref{ss:epboot}.
Let $P_n$ be the empirical measure that puts mass $1/n$
at each diagram $\mathcal{P}_i$.
We draw $\mathcal{P}_1^{\ast}, \ldots \mathcal{P}_n^{\ast}$ from $P_n$ and construct
the corresponding landscapes $\mathcal{L}_1^\ast, \dots, \mathcal{L}_n^\ast$.
Let $\mathcal{\overline L}_n^*$ be the empirical mean and
$\hat{\theta}^{\ast} =$ \mbox{$\sup_{t \in \R} | \sqrt{n} (\mathcal{\overline L}_n^*(t) - \mathcal{\overline L}_n(t)) |$.}
Repeating this $B$ times, we obtain
$\hat{\theta}_1^{\ast}, \ldots \hat{\theta}_B^{\ast}$, and we compute the quantile $q_{\alpha}$.
\begin{theorem}[Confidence Band for Persistent Landscapes]
The interval $C_n(t)$ indexed by $t \in \R$, defined by 
$C_n(t)= \left[ \mathcal{\overline L}_n(t) - \frac{q_{\alpha}}{\sqrt{n}} \; , \; \mathcal{\overline L}_n(t) + \frac{q_{\alpha}}{\sqrt{n}}\right] $,
is a confidence band for $\mu(t)$:
  \begin{equation*}
  \lim_{n \to \infty} \mathbb{P} \left( \mu(t) \in C_n(t) \; \text{ for all } t \right) \geq 1- \alpha.
  \end{equation*}
\end{theorem}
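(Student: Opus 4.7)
The plan is to derive this coverage statement as a direct consequence of the preceding weak convergence theorem together with the general bootstrap procedure from \secref{ss:epboot}. Observe first that the event $\{\mu(t) \in C_n(t) \text{ for all } t\}$ can be rewritten as
\begin{equation*}
\Bigl\{ \sup_{t \in \R} \bigl| \sqrt{n}\bigl(\overline{\mathcal{L}}_n(t) - \mu(t)\bigr) \bigr| \leq q_\alpha \Bigr\} = \Bigl\{ \sup_{f \in \mathcal{F}} |\mathbb{G}_n f| \leq q_\alpha \Bigr\},
\end{equation*}
using $f_t(\mathcal{P}) = \mathcal{L}_\mathcal{P}(1,t)$ and the identification $\sqrt{n}(\overline{\mathcal{L}}_n(t) - \mu(t)) = \mathbb{G}_n f_t$ established above. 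Hence it suffices to show that the distribution of $\sup_f |\mathbb{G}_n f|$ is asymptotically dominated, at level $\alpha$, by the bootstrap quantile $q_\alpha$.

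The first step is to invoke the previous theorem, which shows $\mathcal{F}$ is $P$-Donsker, so $\mathbb{G}_n \rightsquigarrow \mathbb{G}$ in $\ell^\infty(\mathcal{F})$. By the continuous mapping theorem applied to the supremum norm on $\ell^\infty(\mathcal{F})$ (which is a continuous functional in the uniform metric), it follows that $\sup_f |\mathbb{G}_n f|$ converges in distribution to $\sup_f |\mathbb{G} f|$. Simultaneously, since $\mathcal{F}$ is $P$-Donsker, \thmref{th:bootProcess} yields that the bootstrap process $\mathbb{G}_n^*$ also converges in distribution to the same Brownian bridge $\mathbb{G}$, conditionally on the data in probability. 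Applying the continuous mapping theorem again, the bootstrap statistic $\hat\theta^* = \sup_f |\mathbb{G}_n^* f|$ converges in distribution to $\sup_f |\mathbb{G} f|$ as well.

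The next step is to pass from this distributional convergence to convergence of the corresponding $(1-\alpha)$-quantiles. Let $J(r) = \mathbb{P}(\sup_f |\mathbb{G} f| \leq r)$ denote the CDF of the limit supremum, and let $q^*$ be its $(1-\alpha)$-quantile. Assuming $J$ is continuous at $q^*$ (a standard mild condition on the limiting Gaussian supremum, which holds since the supremum of a non-degenerate Gaussian process has a continuous distribution), the quantiles of the bootstrap law converge to those of $J$; consequently $q_\alpha \to q^*$ in probability as $n, B \to \infty$. Combining the convergence $\sup_f |\mathbb{G}_n f| \rightsquigarrow \sup_f |\mathbb{G} f|$ with Slutsky's theorem yields
\begin{equation*}
\liminf_{n \to \infty} \mathbb{P}\Bigl( \sup_{f \in \mathcal{F}} |\mathbb{G}_n f| \leq q_\alpha \Bigr) \geq J(q^*) \geq 1-\alpha,
\end{equation*}
which rewrites as the desired coverage statement for $C_n(t)$.

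The main obstacle is the last transfer from weak convergence of suprema to coverage of bootstrap quantiles: one must verify that the limit distribution $J$ is continuous at the quantile $q^*$ so that Slutsky-type quantile convergence applies, and one must handle both sources of randomness (the data and the bootstrap resampling) carefully, relying on the in-probability conditional convergence statement packaged in \thmref{th:bootProcess}. As is standard in bootstrap theory, the simulation error from using only $B$ bootstrap replicates is made negligible by taking $B$ large, and is therefore absorbed into the asymptotics.
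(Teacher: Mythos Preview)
Your proposal is correct and follows the same route the paper does: the paper gives no explicit proof of this theorem but treats it as an immediate consequence of the preceding Weak Convergence of Landscapes theorem ($\mathcal{F}$ is $P$-Donsker) together with \thmref{th:bootProcess} and the general bootstrap confidence-band construction spelled out at the end of \secref{ss:epboot}. You have simply unpacked that argument in more detail (continuous mapping for the sup-norm, quantile convergence via continuity of the Gaussian supremum law, Slutsky), which is exactly what underlies the paper's one-line justification.
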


\begin{example}[Circles]\label{ex:circles}
Given the nine circles of radii $0.4$ and $0.3$, shown in \figref{fig:circles},
we obtain a sample $X_1, \ldots, X_{100}$ as follows: first, choose
a circle $C_i$ uniformly at random, then sample a point iid from $C_i$.
Let $\mathcal{P}$ be the (Betti 1) persistence diagram corresponding to the
Rips filtration for the sample, and $\mathcal{L}$ be the landscape corresponding
to $\mathcal{P}$.  \footnote{Note that, since in this example we are using sublevel sets, 
the role of birth and death in the definitions of section \ref{ss:topology} is inverted. The death time $d$ is greater than the birth time $b$.}
We repeat this $50$ times to obtain
diagrams $\mathcal{P}_1, \ldots \mathcal{P}_{50}$ and
landscapes $\mathcal{L}_1, \ldots \mathcal{L}_{50}$.
Then, we use the bootstrap procedure to obtain the quantile $q_{\alpha} = 0.234$. Together with $\mathcal{\overline L}_{50}$, this gives us an
approximated $95\%$
confidence band for $\mu(t)= \mathbb{E}_P(\mathcal{L}_i(t))$. On the right of \figref{fig:circles} we show the empirical mean landscape $\mathcal{\overline L}_{50}$ with the $95\%$ confidence band for $\mu(t)$.
\end{example}
\begin{figure}
 \centering
 \includegraphics[height=2in]{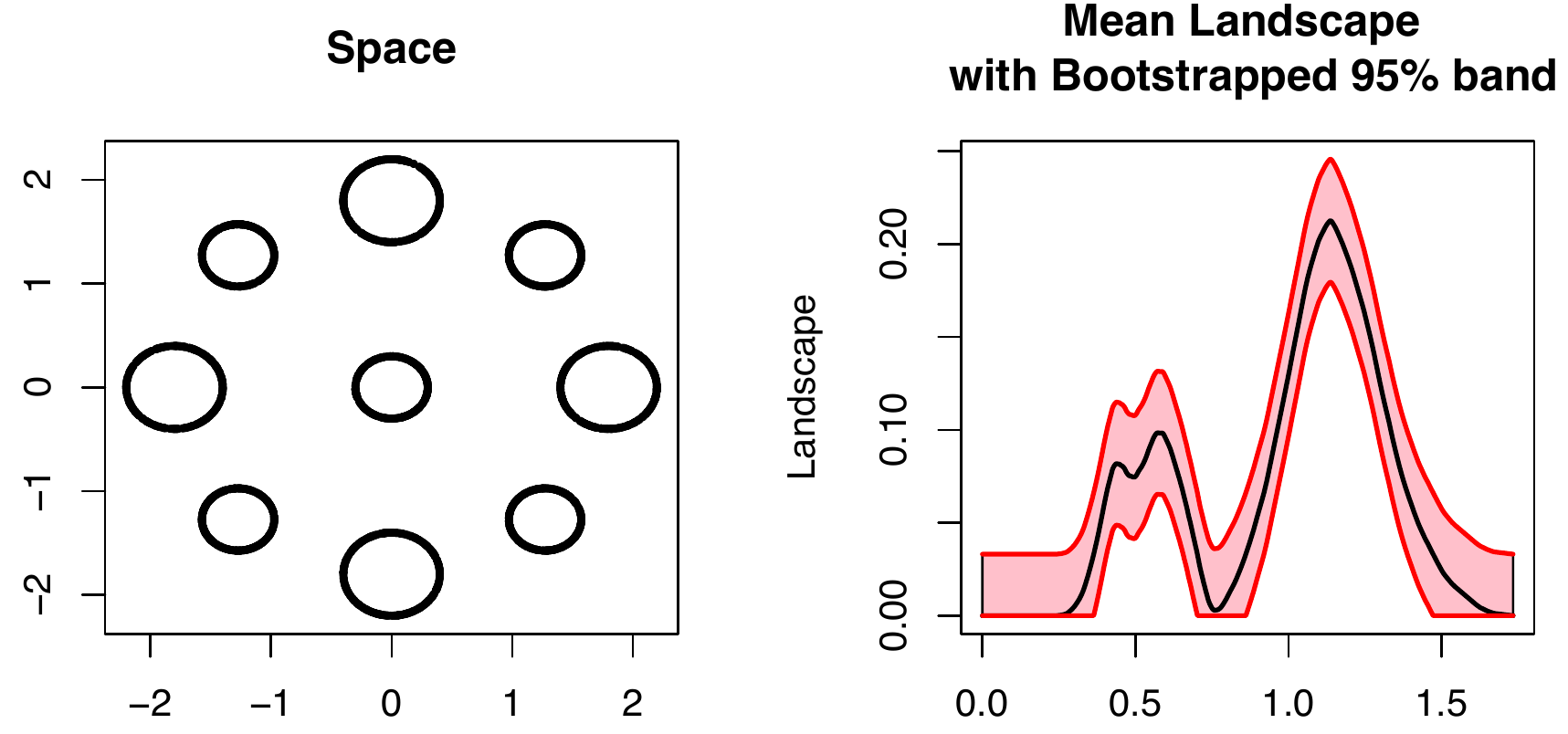}
 \caption{Left: The set of circles from which samples are taken. Right: The confidence band for
 the persistence landscape corresponding to the distance to the point set.}
 \label{fig:circles}
\end{figure}

\subsection{Discussion}\label{sec:discussion}
In this paper, we have described the bootstrap as it
applies to persistence diagrams and landscapes.  The
purpose of this paper was to introduce the bootstrap
and the bootstrap empirical process to topologists.
In a related paper (\cite{arxiv_stathom}), 
aimed towards a statistical audience, we
derive the convergence rates for the technique
presented in~\secref{ss:dgmbootstrap}, as well as present three other
methods for computing confidence sets for persistence diagrams.

The persistence landscape can be thought of as
a summary function of a persistence diagram.  
The bootstrap method that we presented in~\secref{ss:landscapeboot}
trivially generalizes to handle all landscapes $\mathcal{L}(k,t)$.  
Furthermore, we need not limit the
scope of this method to landscape functions. 
In a future paper, we plan to investigate other
meaningful summary functions as well as the convergence
rates for the techniques presented in~\secref{ss:landscapeboot}.

We have demonstrated how the bootstrap works for two examples,
given in~\figref{fig:torus} and~\figref{fig:circles}.
Part of our ongoing research is investigating applications for
these confidence intervals; in particular, we are applying
it to real (rather than simulated) data sets.
One can use the confidence intervals for hypothesis
testing, but an open question is how to determine the power
of such a test.

\section*{Acknowledgement} The authors would like to thank Sivaraman Balakrishnan
for his insightful discussions.

\renewcommand{\refname}{References}
\bibliographystyle{plainnat}
\bibliography{Bootstrap}

\end{document}